\documentclass[a4paper,10pt,reqno, english]{amsart}

\usepackage{amsmath,amssymb,amscd,amsthm,amsfonts}
\usepackage{graphicx,subfigure}
\usepackage{hyperref}
\usepackage{dsfont}
\usepackage[nobysame, alphabetic]{amsrefs}
\usepackage{tikz}
\usepackage[capitalise]{cleveref}

\newtheorem{theorem}{Theorem}
\newtheorem{lemma}{Lemma}

\newtheorem{conjecture}{Conjecture}

\newtheorem{definition}{Definition}

\def\rr{\mathds{R}}

\DeclareMathOperator{\conv}{conv}

\title{An elementary proof of Sierksma's conjecture for seven points in the plane}

\keywords{Tverberg's theorem, Sierksma's conjecture}

\subjclass{52A35}

\author[Sober\'on]{Pablo Sober\'on}\address{Baruch College and The Graduate Center, City University of New York, One Bernard Baruch Way, New York, NY 10010, United States} 
\email{psoberon@gc.cuny.edu}

\thanks{
The research of P. Sober\'on is supported by NSF CAREER grant DMS-237324 and a PSC-CUNY Trad B award.}

\begin{document}

\maketitle

\begin{abstract}
We give a new simple geometric proof that any seven points in the plane have four Tverberg partitions into three sets.  This is the only confirmed non-trivial case of Sierksma's conjecture.  Earlier proofs, by Stephan Hell, relied on topological arguments.
\end{abstract}

%\tableofcontents

\section{Introduction}

In this note, we revisit a structural claim about Tverberg's theorem in combinatorial geometry.  Tverberg's theorem is a central result in the area that describes how one can partition sets of points so that their convex hulls intersect.  There is a significant research interest in understanding Tverberg's theorem and its generalizations \cites{Barany2018, DeLoera2019, Barany2022}.

\begin{theorem}[Tverberg 1966, \cite{Tverberg:1966tb}]
    Let $r,d$ be positive integers.  Given $(r-1)(d+1)+1$ points in $\rr^d$ there exists a partition of them into $r$ parts whose convex hulls intersect.
\end{theorem}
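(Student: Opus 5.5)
We would prove Tverberg's theorem by Sarkaria's tensor-lifting method, which reduces it to the colorful Carath\'eodory theorem. Write $N=(r-1)(d+1)$, so the points are $x_0,\dots,x_N\in\rr^d$. Lift them to $\hat x_i=(x_i,1)\in\rr^{d+1}$. Fix vectors $v_1,\dots,v_r\in\rr^{r-1}$ with $v_1+\dots+v_r=0$ such that this is the only linear relation among them, up to scaling; for instance, take the vertices of a regular simplex centered at the origin. For each $i$ and $j$, form the tensor $\hat x_i\otimes v_j\in\rr^{d+1}\otimes\rr^{r-1}\cong\rr^{N}$. Let $C_i=\{\hat x_i\otimes v_j: j=1,\dots,r\}$. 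Since $\sum_j \hat x_i\otimes v_j=\hat x_i\otimes 0=0$, the origin lies in $\conv C_i$ for every $i$. This gives $N+1$ color classes in $\rr^N$, each containing the origin in its convex hull.

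Next we apply the colorful Carath\'eodory theorem (B\'ar\'any): if $N+1$ sets in $\rr^N$ each contain the origin in their convex hull, then there is a choice of one point from each set whose convex hull also contains the origin. If this is not available from earlier, we prove it by the standard minimization argument. Among all rainbow simplices, pick the one whose convex hull is closest to $0$. If the distance is positive, the closest point lies on a facet, which omits some color $i$. The separating hyperplane through the closest point then has all of $C_i$ strictly on the far side, because $0\in\conv C_i$. Swapping in a suitable point of $C_i$ yields a rainbow simplex strictly closer to $0$, which is a contradiction.

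So we obtain an assignment $j(i)$ and weights $\lambda_i\ge 0$ with $\sum\lambda_i=1$ and $\sum_i \lambda_i\,\hat x_i\otimes v_{j(i)}=0$. Let $A_j=\{i: j(i)=j\}$ and set $y_j=\sum_{i\in A_j}\lambda_i\hat x_i\in\rr^{d+1}$. Then $\sum_j y_j\otimes v_j=0$. Pairing against an arbitrary linear functional $f$ on $\rr^{d+1}$ gives $\sum_j f(y_j)v_j=0$. Because the only relation among the $v_j$ is the all-equal one, all $f(y_j)$ are equal, and hence all $y_j$ are equal to a common vector $y$. The last coordinate of $y_j$ is $\sum_{i\in A_j}\lambda_i$. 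These sums are equal across $j$ and add up to $1$, so each equals $1/r>0$; in particular every $A_j$ is nonempty. Dividing by $1/r$, the point $r\,y$ (first $d$ coordinates) is a convex combination of $\{x_i: i\in A_j\}$ for every $j$. Thus $A_1,\dots,A_r$ is a Tverberg partition.

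The main obstacle is the linear algebra of the third step: getting the tensor identity to force all the $y_j$ to coincide, and making sure the dimension count $(d+1)(r-1)=N$ matches the number of colors $N+1$. The colorful Carath\'eodory theorem itself is a short argument.
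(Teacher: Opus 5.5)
The paper does not prove this statement. It quotes it as background from Tverberg's 1966 paper, and its own partition-producing arguments (Birch's alternating triples in \cref{lemma:3-3-1}, and the vertices of $C_3(X)$ in \cref{lem:vertices-of-C}) are planar and tailored to $r=3$. So your proposal necessarily takes a different route. It is the standard Sarkaria lifting proof via colorful Carath\'eodory, and it is correct. The dimension count of $N+1$ color classes in $\rr^{d+1}\otimes\rr^{r-1}\cong\rr^N$ is right. The passage from $\sum_j y_j\otimes v_j=0$ to $y_1=\dots=y_r$, and then to last coordinates all equal to $1/r$ (which also makes every $A_j$ nonempty), is sound.

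One sentence in your proof of colorful Carath\'eodory is false as written and needs fixing. The hyperplane orthogonal to the closest point $p$ cannot have \emph{all} of $C_i$ strictly on one side. The current color-$i$ vertex lies on the simplex's side, and $0\in\conv C_i$ forbids all of $C_i$ lying there. What $0\in\conv C_i$ actually gives is some $c\in C_i$ with $\langle c,p\rangle\le 0<\|p\|^2$. Since $p$ lies in the hull of the other $N$ chosen vertices, the new rainbow simplex contains the segment $[p,c]$. The derivative of $t\mapsto\|p+t(c-p)\|^2$ at $t=0$ is $2(\langle c,p\rangle-\|p\|^2)<0$, so the new simplex contains a strictly closer point. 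The minimum you start from exists because there are only finitely many rainbow choices.

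As for what each route buys: your lifting argument proves existence for all $r$ and $d$ at once. However, a single rainbow simplex says nothing about how many Tverberg partitions there are. The paper's Tukey-depth approach is confined to the plane; in $\rr^3$ the region $C_r(X)$ can strictly contain the Tverberg points, as the paper itself remarks. In exchange, it exposes enough structure to count partitions when $r=3$ and $d=2$: the shape of $C_3(X)$, its vertices, and the points of $X$ inside it. Counting is exactly what Sierksma's conjecture requires.
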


We call a partition as above a Tverberg partition.  A notorious open problem around Tverberg's theorem is a conjecture by Sierksma concerning the number of Tverberg partitions of a set of $(r-1)(d+1)+1$ points.

\begin{conjecture}[Sierksma 1979 \cite{Sierksma1979}]
    Every set of $(r-1)(d+1)+1$ points in $\rr^d$ has at least $(r-1)!^d$ Tverberg partitions into $r$ parts.
\end{conjecture}

The conjecture above has remained open for almost fifty years.  Currently, the best general lower bound for the number of Tverberg partitions is roughly $(r-1)!^{d/2}$ when $r$ is a prime number by Vu\v{c}i\'c and \v{Z}ivaljevi\'c \cite{Vucic1993} and later extended to prime powers by Hell \cite{Hell2007}.  The only non-trivial case of the conjecture that has been confirmed is $r=3, d=2$ by Hell \cite{Hell2008} (see also Hell's PhD thesis, which suggests this proof \cite{Hell2006}*{Rmk 3.19}).  Hell's proof uses advanced topological machinery.  There are several constructions in which the number of Tverberg partitions matches exactly Sierksma's conjecture \cites{White2017, Bukh2017, por2018universality}.

In this note, we present a new proof of the case $r=3,d=2$ of Sierksma's conjecture using only elementary geometric arguments.

\begin{theorem}[Hell 2008]\label{thm:main}
    Let $X$ be a set of seven points in the plane.  There are at least four Tverberg partitions of $X$ into three parts.
\end{theorem}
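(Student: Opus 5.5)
We prove \cref{thm:main} by reducing to general position and then splitting on the shape of the convex hull of $X$. First I argue that a small perturbation can only decrease the number of Tverberg partitions. Each Tverberg partition of a generic point set is \emph{stable}: its defining intersection condition is an open condition on the coordinates. Hence if some degenerate configuration had fewer than four partitions, nearby generic configurations would still have at most that many. So it suffices to treat seven points in general position: no three collinear, and no three segments spanned by disjoint pairs concurrent.

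In general position, a Tverberg partition into three parts of $7$ points in $\rr^2$ has part sizes $(3,3,1)$, $(3,2,2)$ or, degenerately excluded, others. Concretely, the partitions are either a triangle containing a point together with two crossing segments and \dots; more precisely the types are $\{a\},\{b,c,d\},\{e,f,g\}$ with $a$ in both triangles, or $\{a,b\},\{c,d\},\{e,f,g\}$ with the segments crossing inside the triangle, or $\{a,b\},\{c,d,e\},\{f,g\}$ of the same kind. I will then do a case analysis on $h=|\text{vertices of }\conv X|\in\{3,\dots,7\}$.

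\textbf{Case $h=7$ (convex position).} Label the points cyclically $1,\dots,7$. Every part containing a single point fails, so all partitions are of type $(2,2,3)$. I exhibit explicit families: for instance $\{1,4\},\{2,6\},\{3,5,7\}$-type configurations obtained by rotation. Using the cyclic symmetry and the fact that crossing of chords depends only on cyclic order, we get $7$ partitions, which is at least $4$.

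\textbf{Smaller hulls.} For $h<7$ the strategy is induction-like: take an interior point $p$ and consider the remaining six points. I would locate a point $p$ lying in two disjoint triangles of $X\setminus\{p\}$, which gives one partition of type $(1,3,3)$. Then I perform local \emph{switching moves}: given a Tverberg partition with common point $q$, moving $q$ continuously along a line, or rotating a line through the intersection point, passes through combinatorial changes where one point switches parts. Each such event produces another Tverberg partition. The key counting device is a parity argument along a rotating line. Take a directed line $\ell$ through a suitable point and rotate it by $\pi$. At each angle, consider the halving structure: the number of points on each side changes by one exactly when $\ell$ crosses a point. A Tverberg point of depth $3$ means every closed halfplane through it contains at least three points. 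I would show that the set of depth-$3$ points (the Tverberg region, nonempty by Tverberg's theorem) has boundary vertices, and that each vertex and edge of this region is witnessed by a distinct partition. This yields at least four partitions when the region is a point or segment with degeneracies accounted for.

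\textbf{Main obstacle.} The main obstacle is guaranteeing four *distinct* partitions in the intermediate cases $h=4,5$, where the Tverberg region can be tiny and several witnesses may coincide. First I would try an explicit case analysis on the order type restricted to the position of the interior points relative to the hull diagonals, using the radial ordering of the six other points around the centerpoint. The claim to establish is that each of the three or more ``antipodal'' triples in that radial order gives a partition, plus one extra from the triangle containing it.
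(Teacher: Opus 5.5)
\textbf{The convex case fails as stated.} Whether two chords of a convex heptagon cross is determined by the cyclic order. Whether their crossing point lies in the triangle on the remaining three points is not, and your count of seven conflates the two. Take the regular heptagon with vertices $(\cos\frac{2\pi k}{7},\sin\frac{2\pi k}{7})$, $k=0,\dots,6$, labelled $1,\dots,7$ in order. The chords $14$ and $26$ meet at about $(0.32,0.15)$, which lies on the opposite side of the line $37$ from $5$. So $\{1,4\},\{2,6\},\{3,5,7\}$ is not a Tverberg partition, and by symmetry none of its seven rotations is. Choosing another cyclic family does not help. The rotations of $\{1,4\},\{2,5\},\{3,6,7\}$ do work for the regular heptagon. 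But for the convex heptagon $1=(0,0.2)$, $2=(-2.5,0.1)$, $3=(-3,0)$, $4=(-1,-5)$, $5=(1,-5)$, $6=(3,0)$, $7=(2.5,0.1)$, the chords $14$ and $25$ cross below the line $36$, hence outside the triangle $367$. So even $h=7$ needs a genuinely geometric argument. A smaller point concerns the reduction to general position. It needs that the Tverberg condition is \emph{closed} (a limit of common points of the three hulls is a common point of the limit hulls), together with finiteness of the set of partitions. The openness at generic configurations that you invoke says nothing about comparing a degenerate configuration with nearby generic ones.

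\textbf{For $h<7$ there is no argument yet.} The switching moves, the parity count along a rotating line, and the claim that every vertex and edge of the depth-$3$ region is witnessed by a distinct partition are never carried out; edges do not yield partitions in any evident way. You also concede that distinctness is unresolved. The region $C=C_3(X)$ is the right object, but the case split should be on $C\cap X$, not on the hull size; this also absorbs the convex case. The missing ingredients are:
\begin{itemize}
\item[(a)] A vertex $v\notin X$ of $C$ is the intersection of two boundary lines $ab$ and $cd$ of $C$, and lies strictly between the two points of $X$ on each. It also lies in $\conv\{e,f,g\}$ of the remaining points; otherwise a line through $v$ would leave $e,f,g$ on one side and only one point of each pair on the other, contradicting depth $3$. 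So each such vertex gives a $(3,2,2)$ partition, and distinct vertices give distinct partitions.
\item[(b)] If $C\cap X=\emptyset$, then $C$ is a polygon with at least four vertices. If it were a triangle, the three open half-planes beyond its sides would each contain at most two points yet cover $X$, giving $|X|\le 6$.
\item[(c)] Every $x\in C\cap X$ is the singleton of at least two $(3,3,1)$ partitions. One is Birch's alternating split of the angular order of $X\setminus\{x\}$ around $x$. A second is found by tracking on which side of the oriented lines $p_ip_{i+3}$ the point $x$ lies.
\item[(d)] $C$ is never a segment, nor a point outside $X$. If $C=\{x\}$ with $x\in X$, then three closed half-planes through $x$, each containing five points, cover the plane. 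This forces exactly two points of $X\setminus\{x\}$ into each of three alternating doubly covered sectors, which yields four $(3,3,1)$ partitions.
\end{itemize}
Then $|C\cap X|\ge 2$ gives four partitions by (c). $|C\cap X|=1$ with $C$ two-dimensional gives two by (c) and two by (a). $C\cap X=\emptyset$ gives four by (a) and (b). $C=\{x\}$ gives four by (d). Your ``point or segment'' dichotomy is the wrong one: segments never occur, and the point case is exactly where vertices contribute nothing and the sector count is indispensable.
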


We hope that an additional proof of this case will motivate further work around Sierksma's conjecture.  Our proof relies on arguments similar to Birch's original proof of Tverberg's theorem \cite{Birch1959}.

In \cref{sec:prelim} we establish some preliminary lemmas before proving \cref{thm:main} in \cref{sec:proof}.  Finally, we discuss which parts of the proof can easily be generalized for higher values of $r$ and $d$ in \cref{sec:remarks}.

\section{Preliminary lemmas}\label{sec:prelim}

For seven points in general position in the plane, there are only two kinds of Tverberg partitions.  The first is two triples and a common point of their convex hull, and the second is two intersecting segments and a triangle that contains the point of intersection.  We call the first kind of partition a $(3,3,1)$ partition, and the second a $(3,2,2)$ partition.

\begin{definition}
    For a set $X \subset \rr^2$ we define $C_k(X)$ to be the set of points of Tukey depth at least $k$.  Formally,
    \[
    C_k (X) = \{ p \in \rr^2: \mbox{ every closed half-plane $H$ with $p \in H$ satisfies $|X \cap H|\ge k$}\}.
    \]
\end{definition}

\begin{lemma}
    Let $X$ be a set of seven points in the plane in general position.  Then $C_3(X)$ is either a single point or $2$-dimensional.  Moreover, if $C_3(X)$ is a point, it must be a point of $X$.
\end{lemma}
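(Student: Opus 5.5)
The plan is to describe $C_3(X)$ as an intersection of half-planes and then rule out the degenerate shapes using general position.

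\textbf{Step 1: $C_3(X)$ is a nonempty compact convex set.} A point $p$ fails to lie in $C_3(X)$ exactly when some closed half-plane containing $p$ contains at most $2$ points of $X$. Equivalently, some open half-plane containing at least $5$ points of $X$ misses $p$. So
\[
C_3(X)=\bigcap \{\conv(Y): Y\subset X,\ |Y|=5\}.
\]
This is compact and convex. It is nonempty: by the centerpoint theorem, or by Helly's theorem, since any three $5$-subsets of a $7$-set share a point of $X$ because $3\cdot 2 < 7$. Being a polygon, $C_3(X)$ is a point, a segment, or $2$-dimensional.

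\textbf{Step 2: exclude the segment case and the non-data point case.} Suppose $C_3(X)$ has empty interior. Then the family of convex sets $\conv(Y)$, $|Y|=5$, has a degenerate intersection. I would use the standard fact that for a finite family of convex polygons, if the intersection has empty interior, then a small number of them already witness this. Concretely, there are two (for a segment) or three (for a point) closed half-planes $H_i$, each bounded by a line through two points of $X$ and each containing at most $2$ points of $X$ on its open side, whose complements meet in a lower-dimensional set.

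\emph{Segment case.} A segment forces two opposite half-planes whose boundary lines coincide or are parallel, with their open sides covering everything except a strip of width zero. The shared boundary line then contains at least $2$ points of $X$, and the two open sides hold at most $2+2$ points. Counting gives $7\le 4+|X\cap \ell|$, so the line $\ell$ holds at least $3$ points of $X$, contradicting general position. Parallel distinct lines cannot give zero width.

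\emph{Point case.} Let $C_3(X)=\{p\}$. I would argue that $p$ lies on at least three of the critical boundary lines $\ell_i$, since otherwise $p$ would be a vertex of only two constraints and the region near $p$ would be a nondegenerate wedge. Each $\ell_i$ passes through two points of $X$. If $p\notin X$, three such lines through $p$ use six distinct points of $X$, taken in pairs; distinct pairs share no point, since otherwise the lines would coincide. The open half-planes, each containing at most $2$ points, must cover all directions around $p$, and the count should force more than $7$ points or a collinearity. Hence $p\in X$.

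\textbf{Main obstacle.} The hardest part is this last counting argument, since it needs the precise configuration of the three cutting lines at $p$. I would parametrize the directions around $p$ and exploit the fact that each line through $p$ and a pair of data points splits the remaining points $2$ against $3$. When $p\notin X$, the three lines are determined by three disjoint pairs, which leaves a seventh point. Placing it in any sector should produce a closed half-plane through $p$ containing only $2$ points, contradicting $p\in C_3$, or else make some wedge nondegenerate.
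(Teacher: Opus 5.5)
\textbf{There is a genuine gap: the ``moreover'' part is not proved.} Your Step 1 is correct, including nonemptiness via Helly, which the paper leaves implicit. Your segment case is the paper's argument: both closed half-planes bounded by the line $\ell$ of the segment are constraints, so $7\le 2+2+|X\cap\ell|\le 6$.

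For $C_3(X)=\{p\}$ with $p\notin X$ you never reach a contradiction. You say the count ``should force more than $7$ points or a collinearity.'' You then defer it as the main obstacle and sketch an analysis of the six points on three lines and the sector holding a seventh point, which you never carry out. None of this is needed. Suppose you have three constraint half-planes $H_1,H_2,H_3$, each with boundary through $p$ and at most two points of $X$ in its open complement, with $H_1\cap H_2\cap H_3=\{p\}$. Then the open complements $\rr^2\setminus H_i$ cover $\rr^2\setminus\{p\}$. That set contains all of $X$ because $p\notin X$, hence $7\le 2+2+2$. You already observed that these open half-planes ``cover all directions around $p$'', but did not see that this finishes the proof.

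\textbf{Obtaining $H_1,H_2,H_3$ needs more than your ``nondegenerate wedge'' remark.}
\begin{itemize}
\item Three constraint \emph{lines} through $p$ are not enough. You need three half-planes meeting only at $p$, i.e.\ whose outward normals lie in no closed half-plane.
\item Two lines through $p$, each carrying both of its closed half-planes as constraints, also cut out exactly $\{p\}$, yet no three of those four half-planes do. This configuration is excluded exactly by your segment-case count (no line bounds constraints on both sides), and you should invoke that here.
\end{itemize}
Then argue locally. Constraints whose boundary misses $p$ contain $p$ in their interior, so the constraints through $p$ already cut out $\{p\}$. Their outward normals therefore lie in no closed half-plane and contain no antipodal pair. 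Three of them then suffice: take any normal $n$, the last normal before $-n$ in counterclockwise order, and the first normal after $-n$.

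The same local reasoning at a relative interior point of the segment, as the paper does, replaces your appeal to an unspecified ``standard fact'' in the segment case.
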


\begin{proof}
    Let $C=C_3(X)$.  The set $C$ can be written as the intersection of a finite number of closed half-planes.  Moreover, a minimal family $\mathcal{F}$ of half-planes that determine $C$ is one where their bounding lines each pass through exactly two points of $X$ and contain exactly one or two more points of $X$ in one of its complementary open half-plane.  Assume for a contradiction that $C$ is a segment $I$.  Let $x$ be a point in the relative interior of that segment.  The only half-planes that determine $C$ and contain $x$ must be parallel to $I$.  Since $C$ is one-dimensional, the two complementary closed half-planes that contain $I$ must be in $\mathcal{F}$.  This is a contradiction since their complements would contain at most four points of $X$, and the line containing $I$ has at most two other points of $X$, due to the general position argument, which implies $|X|\le 6$.

    Now assume again for a contradiction that $C$ is a point $p \not\in X$.  Consider $\mathcal{F}'\subset \mathcal{F}$ the set of bounding hyperplanes of $C$ whose bounding line contains $p$.  Since $C=\{p\}$, there must be three such half-planes whose union is $\rr^2$.  The union of their open complements is $\rr^2 \setminus\{p\}$.  However, each contains at most two points of $X$, which would imply again $|X| \le 6$, a contradiction.
\end{proof}

\begin{lemma}\label{lem:vertices-of-C}
    Let $X$ be a set of seven points in the plane in general position.  If $C=C_3(X)$ is $2$-dimensional, then we can assign to every vertex of $C$ that is not in $X$ a different Tverberg $(3,2,2)$ partition of $X$.
\end{lemma}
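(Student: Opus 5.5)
Let $v$ be a vertex of $C$ with $v\notin X$. The plan is to read off a $(3,2,2)$ partition from the two lines through $v$, and then check that distinct vertices give distinct partitions.

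\textbf{Step 1: the two lines through $v$.} As in the proof of the previous lemma, $C$ is cut out by the minimal family $\mathcal{F}$. Each bounding line of a half-plane in $\mathcal{F}$ passes through exactly two points of $X$, and its open complement contains at most two points of $X$. Since $v$ is a vertex and $v\notin X$, at least two half-planes of $\mathcal{F}$ have bounding lines through $v$, say $\ell_1$ through $\{a,b\}$ and $\ell_2$ through $\{c,d\}$. By general position these four points are distinct. The point $v$ is not in $X$, and it cannot be an endpoint of these segments. I will argue that $v$ lies in the relative interior of both segments $[a,b]$ and $[c,d]$.
\begin{itemize}
\item Suppose $a$ and $b$ were on the same side of $v$ on $\ell_1$.
\item Then a slight rotation of $\ell_1$ about $v$ gives a half-plane containing $v$ whose intersection with $X$ is at most the open complement plus at most one of $a,b$.
\item That is at most $2+0$ or so points, i.e.\ fewer than $3$, contradicting $v\in C_3(X)$.
\end{itemize}
This rotation argument is the key step; I expect it to need care with the count of points on each side. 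The same argument applies to $\ell_2$. Thus $v=[a,b]\cap[c,d]$ is a crossing point.

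\textbf{Step 2: the triangle.} Let $T=X\setminus\{a,b,c,d\}$, which has three points. I want $v\in\conv T$. Suppose not; then some closed half-plane $H$ containing $v$ misses $T$. Rotate $H$ about $v$ to contain as few of $a,b,c,d$ as possible. Since $v$ is interior to both segments, any half-plane through $v$ contains at least one endpoint of each segment. The aim is to choose $H$ with $|H\cap X|\le 2$, which contradicts depth $\ge 3$.

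More concretely, the opposite half-planes to those of $\mathcal{F}$ contain at most two points of $X$ strictly outside. The four rays from $v$ split the plane into four wedges, and each open half-plane bounded by $\ell_i$ holds at most two points. So the points of $T$ are distributed with at most two on the bad side of each line. Combining this with the depth condition at $v$ forces $T$ to surround $v$. This counting is the main obstacle, and I would handle it by a case analysis on which wedges contain the points of $T$.

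\textbf{Step 3: injectivity.} The partition $\{\{a,b\},\{c,d\},T\}$ determines $v$ as the unique intersection point of $[a,b]$ and $[c,d]$. Hence distinct vertices yield distinct $(3,2,2)$ partitions, which gives the required assignment.
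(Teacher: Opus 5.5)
Your three-step plan is the same as the paper's: a rotation about $v$ puts $v$ strictly inside $[a,b]$ and $[c,d]$, then $v\in\conv T$ by contradiction, then injectivity. Step 3 is fine.

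In Step 1 the count needs to be stated correctly. Since $a,b$ lie on one ray from $v$, rotate $\ell_1$ slightly about $v$ in the direction that moves \emph{both} of them to the side containing $C$. The closed half-plane on the other side then contains $v$ and only the at most two points of $X$ that were strictly outside $\ell_1$, contradicting $v\in C_3(X)$. Your phrase ``plus at most one of $a,b$'' would allow three points and give no contradiction. Also, $\{a,b\}\cap\{c,d\}=\emptyset$ holds because $\ell_1\cap\ell_2=\{v\}$ and $v\notin X$, not because of general position.

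The genuine gap is Step 2. You defer the conclusion to an unspecified case analysis on wedges using the bounds from $\mathcal F$, and the claim that this ``forces $T$ to surround $v$'' is never justified. That detour is unnecessary. What is missing is an \emph{upper} bound to go with the lower bound you already noted (that a half-plane through $v$ contains at least one endpoint of each segment).

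Here is how to close it. If $v\notin\conv T$, there is a line through $v$ with $T$ strictly on one side. A small rotation about $v$ keeps this property and makes the line different from $\ell_1$ and $\ell_2$. Because $v$ is in the relative interior of both segments, this line strictly separates $a$ from $b$ and $c$ from $d$. So the closed half-plane on the side away from $T$ contains $v$ and exactly one point of each pair, i.e.\ exactly two points of $X$, contradicting $v\in C_3(X)$.

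This one-line argument is how the paper finishes. With it, and the corrected count in Step 1, your proof is complete.
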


\begin{proof}
    Suppose that vertex $v$ is the intersection of two sides $\ell_1$ and $\ell_2$ of $C$.  The fact that $\ell_1$ is a side of $C$ implies that the line containing $\ell_1$ must contain two points of $X$.  Additionally, its open half-plane that does not contain $C$ has at most two points of $X$.  Moreover, $v$ must be between the two points of $X$ in this line.  If not, a small rotation of this line around $v$ would show that $v \not\in C$.  Let $\{a,b\}$ be the two points of $X$ in the line determined by $\ell_1$, and $\{c,d\}$ be the two points of $X$ in the line determined by $\ell_2$.  Let $\{e,f,g\}$ be the rest of the points in $X$.  We claim that $v \in \conv \{e,f,g\}$.  If not, then there is a line through $v$ that leaves $\{e,f,g\}$ on one side.  We can assume without loss of generality that this line does not contain $\ell_1$ nor $\ell_2$.  The other side of this line would contain exactly one point of $\{a,b\}$ and exactly one point of $\{c,d\}$, contradicting $v \in C$.  We assign to $v$ the Tverberg partition $\{a,b\}, \{c,d\},\{e,f,g\}$.  Any two vertices of $C$ will receive different partitions.
\end{proof}

\begin{lemma}\label{lemma:3-3-1}
    Let $X$ be a set of seven points in general position such that one of the points $x \in X$ satisfies $x \in C_3(X)$.  Then, there are at least two $(3,3,1)$ Tverberg partitions of $X$ in which $x$ is the singleton. 
\end{lemma}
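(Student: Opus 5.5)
The plan is to reduce the statement to a counting problem about directions around $x$. Let $Y = X\setminus\{x\}$, a set of six points. For a triple $T\subset Y$, general position gives $x\in\conv T$ exactly when the three directions from $x$ to the points of $T$ do not lie in an open half-plane bounded by a line through $x$. So we must find two triples $T$ with $x\in \conv T$ and $x\in\conv(Y\setminus T)$; each such pair gives a $(3,3,1)$ partition with singleton $x$.

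\emph{Step 1 (translate $x\in C_3(X)$).} For each $y\in Y$ let $k_y$ be the number of points of $Y$ strictly on the counterclockwise side of the line $xy$. By general position no other point of $Y$ lies on that line. Rotate the line slightly about $x$ so that $y$ falls on the clockwise side. The closed half-plane on the counterclockwise side then contains $x$ and exactly $k_y$ points of $Y$, so $k_y+1\ge 3$. Doing the same on the other side gives $5-k_y\ge 2$. Hence $k_y\in\{2,3\}$. More generally, every line through $x$ that avoids $Y$ has between $2$ and $4$ points of $Y$ on each side.

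\emph{Step 2 (count good triangles).} A triple fails to contain $x$ exactly when its directions lie in an open half-circle. Each such triple can be charged uniquely to its most clockwise point $y$ together with two of the $k_y$ points counterclockwise of $y$ within angle $\pi$. So the number of bad triples is $\sum_y\binom{k_y}{2}$. Each unordered pair $\{y,y'\}$ is counted exactly once in $\sum_y k_y$, so $\sum_y k_y=15$. With $k_y\in\{2,3\}$ this forces three $2$'s and three $3$'s, giving $3\cdot 1+3\cdot 3=12$ bad triples. Hence exactly $8$ of the $20$ triples contain $x$.

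\emph{Step 3 (pass from triangles to partitions).} The $20$ triples form $10$ complementary pairs. Let $g$ be the number of pairs in which both triples contain $x$, and $h$ the number in which exactly one does. Then $2g+h=8$, so $g\ge 2$ is equivalent to $h\le 4$. I expect the cleanest route is to show $h=0$: whenever $T$ is bad, its complement is also bad. Note $g$ counts exactly the desired partitions, so $h=0$ would give four of them.

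\emph{Main obstacle: the sweep in Step 3.} If $T$ is bad, there is a line through $x$ with $T$ strictly on one side. I would sweep this line about $x$ through all positions that keep $T$ strictly on one side, and track how many points of $Y\setminus T$ share $T$'s side. This number changes by $\pm1$ at each crossing. Near the two extreme positions, where the line is about to pass a point of $T$, I would use the values $k_y\in\{2,3\}$ at those extreme points of $T$ to pin the count down. The aim is to show it must be $0$ at some position, so that line separates $T$ from $Y\setminus T$ and the complement is bad as well.

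If this bookkeeping does not close, the fallback is to prove only the weaker bound $h\le 4$. For that I would use the cyclic order $y_1,\dots,y_6$ of directions and the known pattern of the $k_y$ (three $2$'s and three $3$'s) to list the possible configurations up to rotation directly.
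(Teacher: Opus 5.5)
\textbf{The gap is in Step 3.} Your Steps 1 and 2 are correct: $k_y\in\{2,3\}$, $\sum_y k_y=15$, exactly $8$ of the $20$ triples contain $x$, and $g\ge 2\iff h\le 4$. But the claim you plan to prove in Step 3 is false: the complement of a bad triple need not be bad, so $h=0$ fails. In fact the lemma is sharp.

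Put $x$ at the origin and $y_1,\dots,y_6$ on the unit circle at angles $0^\circ,30^\circ,60^\circ,195^\circ,225^\circ,255^\circ$. No two are antipodal, and every closed half-plane bounded by a line through $x$ contains at least two of them. So $X$ is in general position with $x\in C_3(X)$, and $(k_{y_1},\dots,k_{y_6})=(2,2,2,3,3,3)$. The eight triples containing $x$ have index sets $\{1,2,4\},\{1,3,4\},\{1,3,5\},\{2,3,5\},\{2,4,5\},\{2,4,6\},\{3,4,6\},\{3,5,6\}$. Only $\{1,2,4\}|\{3,5,6\}$ and $\{1,3,5\}|\{2,4,6\}$ are complementary, so $g=2$ and $h=4$. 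Concretely, $T=\{y_2,y_5,y_6\}$ is bad, but every line through $x$ with $T$ strictly on one side also has $y_1$ on that side. Your sweep count therefore never drops below $1$, and the complement $\{y_1,y_3,y_4\}$ contains $x$.

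\textbf{What is left unproved.} The argument rests entirely on the fallback, which you only announce. As written you have eight good triangles but not a single partition, since $2g+h=8$ by itself allows $g=0$. The fallback can be completed as follows.
\begin{enumerate}
\item By your own charging argument, the bad triples are exactly $\{y_i,y_j,y_l\}$ with $y_j,y_l$ among $y_{i+1},\dots,y_{i+k_{y_i}}$. So the vector $(k_{y_i})$ determines everything.
\item The constraint you have not used is $k_{y_i}+k_{y_{i+3}}=5$ (indices mod $6$, counterclockwise). Indeed, $k_{y_i}=3$ iff $y_{i+3}$ lies within angle $\pi$ counterclockwise of $y_i$. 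That holds iff $y_i$ does not lie within angle $\pi$ counterclockwise of $y_{i+3}$, i.e. iff $k_{y_{i+3}}=2$.
\item Up to rotation this leaves only two patterns. The pattern $(2,3,2,3,2,3)$ is three alternating pairs and gives $g=4$. The pattern $(2,2,2,3,3,3)$ is the example above and gives $g=2$.
\end{enumerate}

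For comparison, the paper avoids any enumeration. Birch's alternating split $\{p_1,p_3,p_5\},\{p_2,p_4,p_6\}$ gives one partition. A discrete intermediate-value argument on the oriented diagonals $p_ip_{i+3}$ then produces an $i$ for which $\{p_i,p_{i+2},p_{i+3}\},\{p_{i+1},p_{i+4},p_{i+5}\}$ is a second.
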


\begin{proof}
    The condition is equivalent to $x \in C_2(Y)$, where $Y = X \setminus\{x\}$.  Order the points of $Y$ clockwise around $x$ as $p_1,\dots, p_6$.  Birch's classic argument shows that $\{p_1, p_3, p_5\}, \{p_2,p_4,p_6\}, \{x\}$ is a Tverberg partition.  This is because any half-plane that contains $x$ must have at least two consecutive points of $Y$ in the cyclic order, which implies it has at least one point of $p_1,p_3,p_5$ and at least one point of $p_2,p_4,p_6$.  Therefore, we cannot have a line separating $x$ from either of those two sets.  We need to show that there is another $(3,3,1)$ partition.  This follows from Hell's results on Birch partitions \cite{Hell2008a}, but in this case we can give a simple proof as well.

    Whether a triangle $\triangle p_ip_jp_k$ contains $x$ only depends on the directions of the rays $xp_i, xp_j, xp_k$, so we may assume without loss of generality that $p_1,\dots,p_6$ are in convex position.  To show the existence of a second $(3,3,1)$ partition, we use a method similar to the solution of Problem 6 from IMO 2006.  Consider the oriented lines $p_i p_{i+3}$ with indices modulo $6$.  As we increase the value of $i$ by $1$, we eventually go from $p_1p_4$ to $p_4p_1$, which means that the side of the line on which $x$ lies changes.  Therefore, there must be an $i$ such that $x$ is left of $p_ip_{i+3}$ but on the right of $p_{i+1}p_{i+4}$.  Assume without loss of generality that $i=1$ (see \cref{fig:rotate}).  Now, the triangle $\triangle p_6p_1p_2$ does not contain $x$ since $x \in C_2(Y)$, so $x \in \triangle p_2p_5p_6$.  Similarly, $x \not\in \triangle p_1 p_2 p_3$, so $x \in \triangle p_1p_3p_4$.  This gives us the desired second partition $\{x\},\{p_1,p_3,p_4\}, \{p_2,p_5,p_6\}$.

    \begin{figure}
        \centering
        \includegraphics[scale=1]{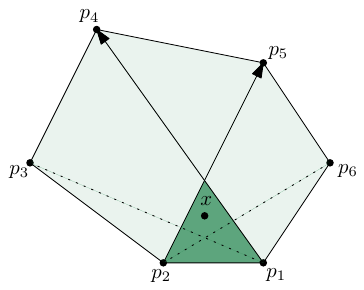}
        \caption{If $x$ is to the left of $p_1p_4$ and to the right of $p_2p_5$ it must be in the shaded triangle.  Then, the two dotted lines must leave $x$ on the side with the most points from $Y$.}
        \label{fig:rotate}
    \end{figure}
\end{proof}

\section{Proof of \cref{thm:main}}\label{sec:proof}

We can assume without loss of generality that $X$ is in general position.  If $X$ is not in general position, it can be approximated by a sequence $\{X_n\}$ of points in general position.  If every $X_n$ has at least four Tverberg partitions into three parts, then the limit $X$ does too.

We look at several cases depending on the structure of $C=C_3(X)$.

\textbf{Case 1.}  $C \cap X = \emptyset$.

In this case, $C$ is $2$-dimensional, which implies it is a convex polygon.  We claim that $C$ has at least four vertices.  If this claim holds, then by \cref{lem:vertices-of-C} we have the four Tverberg partitions we seek.  If $C$ is a triangle, consider the three open half-planes defined by the sides of $C$ that do not contain $C$.  By construction, each has at most two points of $X$.  This implies $|X| \le 6$, a contradiction.

\textbf{Case 2.} $|C \cap X| \ge 2$.

In this case, by \cref{lemma:3-3-1}, each point of $C \cap X$ defines at least two different $(3,3,1)$ Tverberg partitions, and we are done.

\textbf{Case 3. $C$ is $2$-dimensional and $|C \cap X| = 1$.}

In this case, the point in $X \cap C$ defines two $(3,3,1)$ Tverberg partitions.  We have at least two vertices of $C$ that are not in $X$, which gives us two additional $(3,2,2)$ Tverberg partitions.

\textbf{Case 4. $C$ is a single point of $X$.}

Let $C = \{x\}$ with $x \in X$.    Since $C = \{x\}$ there must be three half-planes $H_1, H_2, H_3$ whose union equals $\rr^2$, each of which has $x$ in its boundary, and each of which contains exactly $5$ points of $X$ (one of which is $x$).  The three boundary lines of $H_1, H_2, H_3$ divide $\rr^2$ into six sectors.  Three sectors are covered twice and three are covered once, see \cref{fig:sectors}.

\begin{figure}
    \centering
    \includegraphics[scale=1]{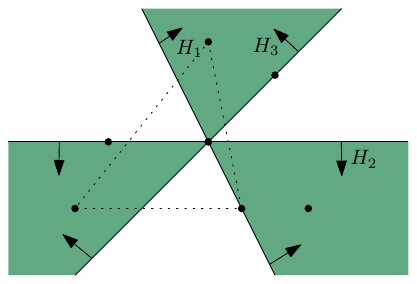}
    \caption{Example of the three doubly-covered sectors.  It is impossible to separate $x$ from a triangle with a vertex on each sector since the side that contains $x$ would also contain a full doubly-covered sector.}
    \label{fig:sectors}
\end{figure}

Let $Y = X \setminus\{x\}$.  We have $|H_1 \cap Y| + |H_2 \cap Y| + |H_3 \cap Y| = 12 = 2 |Y|$.  Therefore, each point of $Y$ must be in the doubly-covered sectors.  Moreover, each of those sectors must have exactly two points of $Y$, as otherwise there would be at least one with fewer than two points, and we would contradict $x \in C_2(Y)$.  Finally, as the doubly-covered sectors are alternating, choosing one point from each sector will give us the vertices of a triangle that contains $x$.  This gives us four $(3,3,1)$ Tverberg partitions with $x$ as the singleton, as it is the number of ways to split $Y$ into two triples that have exactly one point in each doubly-covered sector.

This concludes the proof of \cref{thm:main}.

\section{Remarks}\label{sec:remarks}

It is tempting to extend the ideas to higher values of $r$ and $d$.  Let us mention which parts of the proof can be extended.

First, if we want to count the number of Tverberg partitions of $X$ into $r$ parts, we need to work with $C_r(X)$.  In dimensions greater than or equal to $3$, the set $C_r(X)$ may properly contain the set of Tverberg points for $r$-partitions \cite{Avis1993}, so relying on the vertices of $C_r(X)$ may not be sufficient.  However, it is encouraging that Case $4$ can be extended without problems.

\textbf{Case 4.}  If $X \subset \rr^d$ is a set of $(r-1)(d+1)+1$ points and $C_r(X)$ is a single point $x$, the same arguments show that $x\in X$.  Moreover, there will be a set of $d+1$ closed half-spaces, each containing exactly $d(r-1)+1$ points of $X$ (including $x$) that cover $\rr^d$.  There will be $d+1$ sectors that are covered $d$ times by these half-spaces.  The same counting argument shows that $X\setminus\{x\}$ must be evenly distributed among these $d+1$ sectors.  Picking one point of each sector again gives a simplex that contains $x$, so we have $(r-1)!^d$ partitions.

When $C_r(X)$ is higher-dimensional, only a handful of the properties we used are preserved.  For example, consider the case when $X \subset \rr^d$ is a set of $(r-1)(d+1)+1$, $C_r(X)$ has dimension $d$, and $C_r(X) \cap X = \emptyset$.  Using the same method, we can show that $C_r(X)$ cannot be a simplex, so it must have more than $d+1$ vertices.  Even in the case $d=2$, giving a lower bound for the number of Tverberg partitions associated with each vertex is difficult.  See Hell's results for the best bounds of this kind \cite{Hell2008a}.  Hell showed that for any set $X$ of $3N$ points in the plane, any point $x$ in $C_N(X)$ will be in at least $N!$ different $(N+1)$-Tverberg partitions of $X \cup \{x\}$ in which $x$ is a singleton and $X$ is partitioned into triples.

Combining our methods with Hell's results on the number of Birch partitions only shows that for a set $X$ of $3r-2$ points in the plane, there are at least $4(r-2)!$ Tverberg partitions of $X$ into $r$ parts.  This is still far from the $(r-1)!^2$ conjectured by Sierksma.
% \bib, bibdiv, biblist are defined by the amsrefs package.


\begin{bibdiv}
\begin{biblist}

\bib{Avis1993}{article}{
      author={Avis, David},
       title={The m-core properly contains the m-divisible points in space},
        date={1993},
     journal={Pattern recognition letters},
      volume={14},
      number={9},
       pages={703\ndash 705},
}

\bib{Birch1959}{article}{
      author={Birch, B.~J.},
       title={On {$3N$} points in a plane},
        date={1959},
        ISSN={0008-1981},
     journal={Proc. Cambridge Philos. Soc.},
      volume={55},
       pages={289\ndash 293},
         url={https://doi.org/10.1017/s0305004100034071},
      review={\MR{109315}},
}

\bib{Barany2022}{article}{
      author={B\'ar\'any, Imre},
      author={Kalai, Gil},
       title={Helly-type problems},
        date={2022},
        ISSN={0273-0979,1088-9485},
     journal={Bull. Amer. Math. Soc. (N.S.)},
      volume={59},
      number={4},
       pages={471\ndash 502},
         url={https://doi.org/10.1090/bull/1753},
      review={\MR{4478031}},
}

\bib{Bukh2017}{article}{
      author={Bukh, Boris},
      author={Loh, Po-Shen},
      author={Nivasch, Gabriel},
       title={Classifying unavoidable {T}verberg partitions},
        date={2017},
        ISSN={1920-180X},
     journal={J. Comput. Geom.},
      volume={8},
      number={1},
       pages={174\ndash 205},
         url={https://doi.org/10.20382/jocg.v8i1a9},
      review={\MR{3670821}},
}

\bib{Barany2018}{article}{
      author={B\'ar\'any, Imre},
      author={Sober\'on, Pablo},
       title={Tverberg's theorem is 50 years old: a survey},
        date={2018},
        ISSN={0273-0979,1088-9485},
     journal={Bull. Amer. Math. Soc. (N.S.)},
      volume={55},
      number={4},
       pages={459\ndash 492},
         url={https://doi.org/10.1090/bull/1634},
}

\bib{DeLoera2019}{article}{
      author={De~Loera, Jes\'us~A.},
      author={Goaoc, Xavier},
      author={Meunier, Fr\'ed\'eric},
      author={Mustafa, Nabil~H.},
       title={The discrete yet ubiquitous theorems of {C}arath\'eodory,
  {H}elly, {S}perner, {T}ucker, and {T}verberg},
        date={2019},
        ISSN={0273-0979,1088-9485},
     journal={Bull. Amer. Math. Soc. (N.S.)},
      volume={56},
      number={3},
       pages={415\ndash 511},
         url={https://doi.org/10.1090/bull/1653},
}

\bib{Hell2006}{thesis}{
      author={Hell, Stephan},
       title={Tverberg-type theorems and the fractional helly property},
        type={Dissertation, Technische Universit{\"a}t Berlin},
     address={Berlin, Germany},
        date={2006},
}

\bib{Hell2007}{article}{
      author={Hell, Stephan},
       title={On the number of {T}verberg partitions in the prime power case},
        date={2007},
        ISSN={0195-6698,1095-9971},
     journal={European J. Combin.},
      volume={28},
      number={1},
       pages={347\ndash 355},
         url={https://doi.org/10.1016/j.ejc.2005.06.005},
      review={\MR{2261824}},
}

\bib{Hell2008a}{article}{
      author={Hell, Stephan},
       title={On the number of {B}irch partitions},
        date={2008},
        ISSN={0179-5376,1432-0444},
     journal={Discrete Comput. Geom.},
      volume={40},
      number={4},
       pages={586\ndash 594},
         url={https://doi.org/10.1007/s00454-008-9083-9},
      review={\MR{2453329}},
}

\bib{Hell2008}{article}{
      author={Hell, Stephan},
       title={Tverberg's theorem with constraints},
        date={2008},
        ISSN={0097-3165,1096-0899},
     journal={J. Combin. Theory Ser. A},
      volume={115},
      number={8},
       pages={1402\ndash 1416},
         url={https://doi.org/10.1016/j.jcta.2008.02.007},
      review={\MR{2455585}},
}

\bib{por2018universality}{article}{
      author={P{\'o}r, Attila},
       title={Universality of vector sequences and universality of tverberg
  partitions},
        date={2018},
     journal={arXiv preprint arXiv:1805.07197},
}

\bib{Sierksma1979}{book}{
      author={Sierksma, Gerard},
       title={{Convexity without linearity; the Dutch cheese problem}},
        date={1979},
        note={{Mimeographed notes}},
}

\bib{Tverberg:1966tb}{article}{
      author={Tverberg, Helge},
       title={{A generalization of Radon's theorem}},
        date={1966},
     journal={J. London Math. Soc.},
      volume={41},
      number={1},
       pages={123\ndash 128},
}

\bib{Vucic1993}{article}{
      author={Vu\v{c}i\'{c}, Aleksandar},
      author={{\v{Z}}ivaljevi\'c, Rade~T.},
       title={Note on a conjecture of {S}ierksma},
        date={1993},
        ISSN={0179-5376,1432-0444},
     journal={Discrete Comput. Geom.},
      volume={9},
      number={4},
       pages={339\ndash 349},
         url={https://doi.org/10.1007/BF02189327},
}

\bib{White2017}{article}{
      author={White, Moshe~J.},
       title={On {T}verberg partitions},
        date={2017},
        ISSN={0021-2172,1565-8511},
     journal={Israel J. Math.},
      volume={219},
      number={2},
       pages={549\ndash 553},
         url={https://doi.org/10.1007/s11856-017-1490-2},
      review={\MR{3649599}},
}

\end{biblist}
\end{bibdiv}
\end{document}